\definecolor{color1}{RGB}{134,14,156}
\definecolor{color2}{RGB}{6,138,39}
\definecolor{color3}{RGB}{12,39,156}
\definecolor{color4}{RGB}{153,0,0}
\definecolor{mygray}{RGB}{192, 192, 192}
\tikzset{
        hatch distance/.store in=\hatchdistance,
        hatch distance=10pt,
        hatch thickness/.store in=\hatchthickness,
        hatch thickness=2pt
    }
\pgfqpoint{\hatchdistance}{\hatchdistance}}
\newtheorem{thm}[subsection]{Theorem}
\newtheorem{prop}[subsection]{Proposition}
\newtheorem{cor}[subsection]{Corollary}
\theoremstyle{definition}  
\newtheorem{example}[subsection]{Example}
\newtheorem{remark}[subsection]{Remark}
\newcommand{\F}{\mathbb{F}}
\newcommand{\C}{\mathbb{C}}
\newcommand{\Z}{\mathbb{Z}}
\newcommand{\map}{\rightarrow}
\newcommand{\iso}{\cong}
\newcommand{\ol}{\overline}
\newcommand{\Ct}{C \tau} 
\renewcommand{\aa}{\alpha_1} 
\newcommand{\clE}[2][{}]{E_{#2}^{#1}(S^0; BP)}
\newcommand{\motE}[2][{}]{E_{#2}^{#1}(S^{0,0}; BPL)}
\newcommand{\olmotE}[2][{}]{\ol{E}_{#2}^{#1}(S^{0,0}; BPL)}
\newcommand{\quotient}[2]{{\raisebox{.2em}{$#1$}\left/\raisebox{-.2em}{$#2$}\right.}}
\DeclareMathOperator{\colim}{colim}
\renewcommand{\to}{\ensuremath{ \mathrel{ \mkern1.5mu\textrm{\arro\symbol{71}}  \mkern-1.1mu\textrm{\arro\symbol{65}} \mkern+1mu } \xspace }}	
\newcommand{\lto}{\ensuremath{ \mathrel{ \mkern1.5mu\textrm{\arro\symbol{71}} \mkern-2.5mu\textrm{\arro\symbol{71}} \mkern-1.1mu\textrm{\arro\symbol{65}} \mkern+1mu } \xspace }}	
\begin{document}

\title{The structure of motivic homotopy groups}

\author{Bogdan Gheorghe}
\email{gheorghebg@wayne.edu}
\author{Daniel C.\ Isaksen}
\email{isaksen@wayne.edu}
\address{Department of Mathematics\\ Wayne State University\\
Detroit, MI 48202, USA}
\date{\today}

\thanks{The second author was supported by NSF grant DMS-1202213.}

\subjclass[2000]{14F42, 55Q45, 55T15}

\keywords{motivic homotopy theory, motivic stable homotopy group,
eta-local motivic homotopy groups, Adams-Novikov spectral sequence,
Adams spectral sequence}

\begin{abstract}
We study the stable motivic homotopy groups $\pi_{s,w}$ 
of the 2-completion of the motivic sphere spectrum over $\C$.
When arranged in the $(s,w)$-plane, these groups
break into four different regions: a vanishing region,
an $\eta$-local region that is entirely known,
a $\tau$-local region that is identical to classical stable homotopy
groups, and a region that is not well-understood.
\end{abstract}

\maketitle

\section{Introduction}
\label{sctn:intro}

This article is concerned with the motivic stable homotopy groups over
$\C$.  More specifically, we consider the motivic stable
homotopy groups $\pi_{s,w}$ of the 2-completion of the motivic sphere
spectrum, where $s$ is the stem and $w$ is the motivic weight.
Motivic completion behaves somewhat differently
than classical completion.  In particular, the homotopy
groups of the completed motivic sphere are not necessarily the same as the
completions of the integral motivic homotopy groups.
In fact, one needs to be careful about completion with respect to the
Hopf map $\eta$ as well 
\cite{HKO11b}.

The bigraded nature of the motivic homotopy 
groups leads to a natural arrangement
in the $(s,w)$-plane.  Our main result is that this plane breaks into
four distinct regions.  These regions are indicated in Figure
\ref{fig:chartintro}.  For clarity, the figure is not to scale.

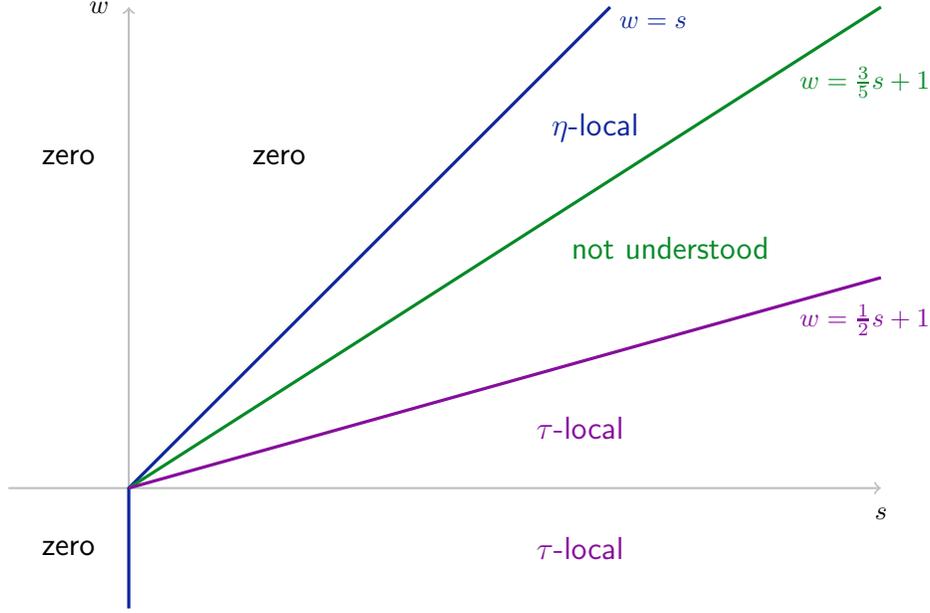
\begin{figure}[ht]
\begin{tikzpicture}[scale=0.4] 
\centering

\draw [ thick, ->, mygray] (0,-4) -- (0,16);
\node [left= 4pt] at (0,16) { $w$ };
\draw [ thick, ->, mygray] (-4,0) -- (25,0);
\node [below=4pt] at (25,0) { $s $ };

\draw [very thick, color3] (0,-4) -- (0,0) -- (16,16);
\node [right, color3] at (16,15.5) { $w = s$ };
\node [color3] at (15.5,12) { \Large $\eta\textsf{-local}$  };

\draw [very thick, color2] (0,0) -- (25,16);
\node [right, color2] at (22,13.5) { $w = \frac{3}{5}s + 1$ };
\node [color2] at (18,8) {  \Large $\textsf{not understood}$ };

\draw [very thick, color1] (0,0) -- (25,7);
\node [right, color1] at (22,5.6) { $w = \frac{1}{2}s + 1$ };
\node [color1, align=center] at (15,2) { \Large  $\tau\textsf{-local}$};
\node [color1, align=center] at (15,-2) { \Large  $\tau\textsf{-local}$};

\node at (-2,-2) { \Large \textsf{zero} };
\node at (-2,11) { \Large \textsf{zero} };
\node at (5,11) { \Large \textsf{zero} };
\end{tikzpicture}

\caption{Homotopy groups $\pi_{s,w}$ of the 2-completed
motivic sphere over $\C$ (not to scale).}
\label{fig:chartintro}
\end{figure}

Beware that the grading in Figure \ref{fig:chartintro} does not follow
the usual Adams style.  The 
vertical axis plots the motivic weight, not a spectral sequence
filtration.  On the other hand, the Adams and Adams-Novikov charts in
\cite{Isaksen-Adams} \cite{Isaksen-Adams-Novikov} and elsewhere
suppress the motivic weight and show
spectral sequence filtrations on the vertical axis.  The reader
should be careful not to confuse the different conventions.

We will now describe the qualitative behavior of 
motivic homotopy groups in each region of Figure \ref{fig:chartintro}.

\subsection*{The vanishing region}
The group $\pi_{s,w}$ is zero when $s<0$ or $w > s$.
This is closely related to Morel's connectivity theorem for 
motivic homotopy theory \cite{MorelA1}.
We will give a different proof in Theorem \ref{thm:vanishing}
that is consistent with the methods of this article.
The boundaries of the vanishing region are sharp in the sense
that there are infinitely families of non-zero elements that lie
on the line $s = 0$ and on the line $w = s$.

\subsection*{The $\tau$-local region}
Levine \cite[Theorem 6.7]{Levine14} showed that the realization functor from motivic homotopy theory over $\C$ to classical homotopy theory induces an isomorphism 
\begin{equation*}
\pi_{s,0} \stackrel{\iso}{\to} \pi_s
\end{equation*}
from motivic homotopy groups in weight zero to classical homotopy groups
(even before 2-completion).
This means that 
the classical groups $\pi_{s}$ lie on the positive $s$-axis
in Figure \ref{fig:chartintro}.

Recall that $\tau$ is an element of $\pi_{0,-1}$ that induces 
multiplication by $\tau$ in motivic cohomology.
The realization functor from motivic homotopy theory
over $\C$ to classical homotopy theory is well-understood calculationally.
According to \cite[Proposition 3.0.2]{Isaksen14}, applying this functor has the
effect of inverting $\tau$.
We use the motivic Adams-Novikov spectral sequence to show that 
the maps
\[
\pi_{s,w} \map \pi_{s,w-1}
\]
given by multiplication by $\tau$ are isomorphisms when
$w \leq \frac{1}{2} s + 1$ (and when $w \leq 0$ if $s = 0$).
Therefore, each group $\pi_{s,w}$ in this region is isomorphic 
to its $\tau$-localization, which in turn is isomorphic to the
classical (2-completed) homotopy group $\pi_{s}$.

The slope of the boundary of the $\tau$-local region 
is sharp in the sense that there is 
an infinite family of non-zero elements that lie 
on a line of slope $\frac{1}{2}$ and are annihilated by $\tau$.

\subsection*{The $\eta$-local region}
The motivic Hopf map $\eta$ in $\pi_{1,1}$ is not nilpotent.
The values of the localized groups $\pi_{\ast,\ast} [\eta^{-1}]$ 
were conjectured in \cite{GI} and proved in \cite{AM14}.
See Proposition \ref{prop:loc-pi} 
for an explicit description of the calculation.
We use results from \cite{AM14}
to show that every element of $\pi_{s,w}$ is $\eta$-local
when $w > \frac{3}{5} s + 1$ and $w \leq s$.
This means that the maps 
\[
\pi_{s,w} \map \pi_{s+1,w+1}
\]
given by multiplication by $\eta$ are isomorphisms in this region.
As a result, every group in the $\eta$-local region is explicitly known.

The upper boundary of the $\eta$-local region is sharp, in the sense that 
there is an $\eta$-local family of non-zero elements that lie 
on the upper boundary line.
The slope of the lower boundary is sharp in the sense that there is an
infinite family of non-zero elements that lie on a line
of slope $\frac{3}{5}$ and are annihilated by $\eta$.
 
\subsection*{The not understood region}
The final region consists of groups $\pi_{s,w}$ 
for which $w \leq \frac{3}{5} s + 1$ and $w > \frac{1}{2} s + 1$.
All of the exotic motivic phenomena fall within this relatively
small slice whose boundary lines meet at an angle of less than 5 degrees.
Further study of motivic stable homotopy groups over $\C$ has the
aim of better understanding this region.


\section{The classical and motivic Adams-Novikov spectral sequences}

We will work in the same framework as \cite{Isaksen14}, that is, over $\C$
and at the prime 2.
Our results extend to any other algebraically closed field of characteristic 
zero. 

Let $\clE{r}$ be the $E_r$ page of the classical Adams-Novikov spectral sequence, and let $\motE{r}$ be the $E_r$ page of the motivic Adams-Novikov spectral sequence
for the 2-completed motivic sphere.
In the motivic context,
we use degrees of the form $(s,f,w)$,
where $s$ is the stem, $f$ is the Adams-Novikov filtration,
and $w$ is the weight.
In the classical context, we use degrees of the form $(s,f)$,
where $s$ is the stem and $f$ is the Adams-Novikov filtration.
An element in degree $(s,f,w)$ occurs at location $(s,f)$ 
in a traditional Adams-Novikov chart.

We next describe the relationship between
the motivic $E_2$ page and the classical $E_2$ page.
First define an intermediate object
\begin{equation*}
\olmotE[s,f,w]{2} = \left\{ 
  \begin{array}{l l}
    \clE[s,f]{2} & \quad \text{if $ w = \frac{s + f}{2}$ }\\
    0 & \quad \text{if $ w \neq \frac{s + f}{2}$.}
  \end{array} \right.
\end{equation*}

\begin{prop}[{\cite[Theorem 8 and Section 4]{HKO11}}, {\cite[Theorem 6.1.4]{Isaksen14}}]
\label{prop:motANss}
There is a tri-graded isomorphism
\begin{equation*}
\motE{2} \cong \olmotE{2} \otimes_{\Z} \Z[\tau]
\end{equation*}
where $\tau$ has degree $(0,0,-1)$.
\end{prop}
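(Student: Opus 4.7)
The plan is to compute the motivic $E_2$-page directly from its description as an $\Ext$ group over the motivic $BP$-Hopf algebroid and to recognize that $\Ext$ as a $\Z[\tau]$-extension of its classical counterpart. The approach has two ingredients: a geometric identification of the motivic Hopf algebroid $(BPL_{*,*}, BPL_{*,*}BPL)$, and a formal homological-algebra argument comparing the two $\Ext$ computations.

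The first step, and the main obstacle, is the identification of the motivic $BPL$ coefficients and cooperations. Using motivic Landweber exactness together with the known motivic cohomology of $\Spec \C$ at the prime $2$, one obtains $BPL_{*,*} = BP_*[\tau]$, with $\tau$ in bidegree $(0,-1)$ and each $v_i$ in bidegree $(2(2^i-1), 2^i-1)$, and $BPL_{*,*}BPL = BP_*BP[\tau]$, with each $t_i$ in bidegree $(2(2^i-1), 2^i-1)$. In other words, the motivic Hopf algebroid is the base change of the classical Hopf algebroid $(BP_*, BP_*BP)$ along the flat map $\Z \to \Z[\tau]$. This identification uses genuinely motivic input and is essentially the content of the cited results of Hu--Kriz--Ormsby.

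Granted this input, the second step is purely formal: the motivic cobar complex agrees levelwise with the classical cobar complex tensored over $\Z$ with $\Z[\tau]$, since $\tau$ is central and the relative tensor products distribute through it. Because $\Z[\tau]$ is $\Z$-flat, cohomology commutes with $-\otimes_{\Z} \Z[\tau]$, yielding the additive isomorphism
\begin{equation*}
\motE{2} \iso \Ext_{BP_*BP}(BP_*, BP_*) \otimes_{\Z} \Z[\tau].
\end{equation*}
Finally, one matches the tri-grading. Each $v_i$ and $t_j$ has motivic weight equal to exactly half its topological degree, while $\tau$ carries only weight. Thus any monomial representing a class of classical bidegree $(s,f)$, which sits in cobar length $f$ with total internal degree $s+f$, has motivic weight $(s+f)/2$. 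This matches the defining condition of $\olmotE{2}$, and the claimed tri-graded isomorphism follows.
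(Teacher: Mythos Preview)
The paper does not supply its own proof of this proposition: it is stated as a citation to \cite{HKO11} and \cite{Isaksen14}, with no argument in the text. Your sketch is correct and is essentially the argument underlying those references---identify the motivic Hopf algebroid as the flat base change $(BP_*[\tau],BP_*BP[\tau])$ of the classical one, deduce the $\Ext$ isomorphism by flatness of $\Z[\tau]$ over $\Z$, and read off the weight grading from the fact that every $v_i$ and $t_j$ has motivic weight equal to half its internal degree, so that a cobar class of classical bidegree $(s,f)$ sits in weight $(s+f)/2$.
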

In other words, the motivic Adams-Novikov $E_2$ page 
is completely determined by the classical 
Adams-Novikov $E_2$ page.
We rephrase Proposition \ref{prop:motANss} in an even more explicit form.

\begin{cor}
\label{cor:motANss}
As a graded abelian group, the motivic 
Adams-Novikov $E_2$ page is given by 
\begin{equation*}
\motE[s,f,w]{2} = \left\{ 
  \begin{array}{l l}
    \clE[s,f]{2} & \quad \text{if $ w \leq \frac{s + f}{2}$ }\\
    0 & \quad \text{if $ w > \frac{s + f}{2}$.}
  \end{array} \right.
\end{equation*}
\end{cor}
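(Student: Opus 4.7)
The strategy is to read off the corollary directly from Proposition \ref{prop:motANss} by tracking tridegrees. Since $\tau$ sits in tridegree $(0,0,-1)$, the contribution of $x \cdot \tau^k$ (with $x \in \olmotE[s_0,f_0,w_0]{2}$) to tridegree $(s,f,w)$ of the tensor product forces $s_0 = s$, $f_0 = f$, and $w_0 = w + k$. So the first step I would record is the identification
\[
\motE[s,f,w]{2} \;\cong\; \bigoplus_{k \geq 0} \olmotE[s,f,w+k]{2}.
\]

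Next I would feed in the definition of $\olmotE{2}$: the $k$-th summand vanishes unless $w + k = \tfrac{s+f}{2}$, and in that case it is $\clE[s,f]{2}$. If $w > \tfrac{s+f}{2}$ the required $k = \tfrac{s+f}{2} - w$ is negative, so every summand vanishes and $\motE[s,f,w]{2} = 0$. If $w \leq \tfrac{s+f}{2}$ and $s + f$ is even, then $k = \tfrac{s+f}{2} - w$ is a non-negative integer, exactly one summand contributes, and it is $\clE[s,f]{2}$.

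The only subtlety is the parity case: if $w \leq \tfrac{s+f}{2}$ but $s + f$ is odd, then no integer $k$ satisfies $w + k = \tfrac{s+f}{2}$, so the direct sum is zero. I would handle this by invoking the standard fact that the classical $BP$-based Adams--Novikov $E_2$ page $\clE[s,f]{2} = \Ext^{f,s+f}_{BP_*BP}(BP_*, BP_*)$ is concentrated in even internal degree, so $\clE[s,f]{2} = 0$ whenever $s+f$ is odd. Thus the asserted formula holds in this case as well, and combining the cases completes the proof. The only potential obstacle is this bookkeeping around parity, but it is resolved by the vanishing remark for $\clE{2}$, so there is no real difficulty.
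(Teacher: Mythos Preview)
Your argument is correct and is exactly the intended one: the paper does not supply a proof for this corollary, treating it as an immediate rephrasing of Proposition~\ref{prop:motANss}, and your unwinding of the tensor with $\Z[\tau]$ is precisely how that rephrasing is justified. Your handling of the parity issue via the even-internal-degree vanishing of $\clE[s,f]{2}$ is the right bookkeeping and fills in the one detail the paper leaves tacit.
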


Let $\aa$ be the classical element of $\clE[1,1]{2}$ 
that represents the Hopf map $\eta$ in $\pi_1$.
We abuse notation and write $\aa$ for its motivic analogue in 
$\motE[1,1,1]{2}$ that represents the motivic Hopf map $\eta$ in $\pi_{1,1}$. Define $\aa^{-1}\clE[s,f]{2}$ to be 
\begin{equation*}
\colim \left( \clE[s,f]{2} \stackrel{\cdot \aa}{\lto} \clE[s+1,f+1]{2} \stackrel{\cdot \aa}{\lto} \cdots \right).
\end{equation*}
Similarly, define
$\aa^{-1}\motE[s,f,w]{2}$ to be 
\begin{equation*}
\colim \left( \motE[s,f,w]{2} \stackrel{\cdot \aa}{\lto} \motE[s+1,f+1,w+1]{2} \stackrel{\cdot \aa}{\lto} \cdots \right).
\end{equation*}
The groups $\aa^{-1}\clE[s,f]{2}$ assemble into 
a localized Adams-Novikov spectral sequence which, by a vanishing line argument, converges to the classical homotopy groups 
$\pi_{\ast} S^0 [\eta^{-1}]$ of the $\eta$-localized classical sphere.
These localized groups are zero since $\eta^4$ is zero classically.
Similarly,
the groups $\aa^{-1} \motE[s,f,w]{2}$ assemble into a 
localized motivic Adams-Novikov spectral sequence that converges to 
$\pi_{\ast,\ast} S^{0,0}[\eta^{-1}]$, which will be described below in 
Proposition \ref{prop:loc-pi}.

\begin{prop} \label{prop:locANss}
The localization map
\begin{equation*}
\motE[s,f,w]{2} \lto \aa^{-1}\motE[s,f,w]{2}
\end{equation*}
is an isomorphism for all weights $w$ and for $s < 5f - 10$.
\end{prop}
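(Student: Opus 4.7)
My plan is to reduce the motivic claim to a purely classical statement via Corollary \ref{cor:motANss}, and then invoke the standard vanishing behavior of the classical Adams-Novikov $E_2$ page under $\aa$-localization.

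First I would split into two cases according to whether $w > \frac{s+f}{2}$ or $w \leq \frac{s+f}{2}$. Multiplication by $\aa$ shifts the tridegree by $(1,1,1)$, so the quantity $w - \frac{s+f}{2}$ is invariant along the $\aa$-tower. In the case $w > \frac{s+f}{2}$, Corollary \ref{cor:motANss} forces every term in the defining colimit of $\aa^{-1}\motE[s,f,w]{2}$ to vanish, so both source and target of the localization map are zero and the claim is trivial.

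In the case $w \leq \frac{s+f}{2}$, the same corollary identifies each term in the colimit with $\clE[s+k,f+k]{2}$ and identifies the $\aa$-action with the classical one. Thus the motivic localization map is identified with the classical localization map
\begin{equation*}
\clE[s,f]{2} \longrightarrow \aa^{-1}\clE[s,f]{2},
\end{equation*}
and it remains to show that this classical map is an isomorphism whenever $s < 5f-10$.

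The main work is this last classical statement, and I expect this to be the hardest step. The idea is that above the line $s = 5f-10$ on the classical Adams-Novikov $E_2$ page every class is already $\aa$-periodic, in the sense that it is neither $\aa$-torsion nor outside the image of every power of $\aa$. To make this precise I would compare the classical $E_2$ page with its known $\aa$-inverted version, which (after the Miller-Ravenel-type computation of $\aa^{-1}\clE{2}$) is concentrated near the line of slope $1$ through the origin, and exploit the $\aa$-Bockstein long exact sequence relating the two. The constants $5$ and $-10$ are dictated by the lowest-filtration $\aa$-exceptional classes in the classical ANSS; once those are cleared, all remaining classes in the region are $\aa$-periodic.

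The principal obstacle is pinning down these sharp constants rather than producing a general vanishing-line bound. A crude argument gives an isomorphism above some line of slope roughly $5$, but establishing the precise form $s < 5f - 10$ requires enough explicit information about the low-filtration classical ANSS to rule out every $\aa$-exceptional class in this region, and it is here that I would need to lean on standard ANSS charts and the known generators of the $\aa$-torsion and $\aa$-divisibility obstructions.
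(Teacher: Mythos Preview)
Your reduction to the classical statement via Corollary \ref{cor:motANss} is exactly what the paper does, and your case split on $w$ versus $\frac{s+f}{2}$ is a correct and careful way to spell out that identification.

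Where you diverge is in the treatment of the classical statement itself. The paper does not prove that
\[
\clE[s,f]{2} \longrightarrow \aa^{-1}\clE[s,f]{2}
\]
is an isomorphism for $s < 5f - 10$; it simply cites \cite[Proposition 5.1]{AM14}. So the ``hardest step'' you identify is not part of this paper at all---it is imported wholesale from Andrews--Miller. Your sketch of how one might approach it (comparison with the known $\aa$-inverted $E_2$ page, an $\aa$-Bockstein argument, low-filtration chart inspection to get the sharp constants) is reasonable in spirit, but as you yourself note it is incomplete, and completing it would essentially mean reproducing the relevant portion of \cite{AM14}. For the purposes of this proposition you should replace that entire discussion with a citation.
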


\begin{proof}
By \cite[Proposition 5.1]{AM14}, the localization map
\begin{equation*}
\clE[s, f]{2} \lto \aa^{-1}\clE[s, f]{2}
\end{equation*}
is an isomorphism in the range $s < 5f -10$.
The motivic analogue follows from the identifications
of Corollary \ref{cor:motANss}.
\end{proof}

For reference, we give a complete description of the $\aa$-localized 
motivic Adams-Novikov spectral sequence. 
We are mostly interested in the $E_{\infty}$ page, which gives the homotopy of the $\eta$-local motivic sphere.

\begin{prop}[{\cite[Corollary 7.1 and Theorem 7.2]{AM14}}]
\label{prop:loc-pi}
\mbox{}

\begin{enumerate}
\item The $E_2$ page of the localized motivic Adams-Novikov spectral sequence is
\begin{equation*}
\aa^{-1} \motE[*,*,*]{2} \cong \quotient{\F_2[\tau, \aa^{\pm 1}, \alpha_3, \alpha_4]}{\alpha_4^2},
\end{equation*}
where $\tau$ has degree $(0,0,-1)$;
$\aa$ has degree $(1,1,1)$;
$\alpha_3$ has degree $(5,1,3)$; and
$\alpha_4$ has degree $(7,1,4)$.
\item All differentials are deduced via the Leibniz rule from $d_3(\alpha_3) = \tau \alpha_1^4$, and 
\begin{equation*}
\aa^{-1} \motE[s,f,w]{\infty} \cong \quotient{\F_2[\aa^{\pm 1}, \alpha_3^2, \alpha_4]}{\alpha_4^2}.
\end{equation*}

\item The homotopy of the $\eta$-localized motivic sphere is given by
\begin{equation*}
\pi_{\ast, \ast} S^{0,0}[\eta^{-1}] \iso \quotient{\F_2[\eta^{\pm 1}, \sigma, \mu_9]}{\sigma^2},
\end{equation*}
where $\eta$ in degree $(1,1)$ is detected by $\aa$;
$\sigma$ in degree $(7,4)$ is detected by $\alpha_4$;
and $\mu_9$ in degree $(9,5)$ is detected by 
$\aa^{-1} \alpha_3^2$.
\end{enumerate}
\end{prop}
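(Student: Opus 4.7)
The plan is to promote the classical Andrews--Miller calculation of the $\eta$-localized Adams--Novikov spectral sequence to the motivic setting via the tensor decomposition of Proposition \ref{prop:motANss}. For part (1), I would start from Andrews--Miller's classical identification $\aa^{-1}\clE[*,*]{2} \cong \F_2[\aa^{\pm 1}, \alpha_3, \alpha_4]/\alpha_4^2$. Because $\tau$ has trivial $(s,f)$-degree, $\aa$-localization commutes with the free base change $-\otimes_{\Z}\Z[\tau]$ from Proposition \ref{prop:motANss}, yielding $\aa^{-1}\motE[*,*,*]{2} \cong \F_2[\tau, \aa^{\pm 1}, \alpha_3, \alpha_4]/\alpha_4^2$. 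The weights of $\alpha_3$ and $\alpha_4$ are determined by the formula $w = (s+f)/2$ on $\olmotE{2}$: respectively $3$ and $4$.

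For part (2), I would lift the classical localized differential $d_3(\alpha_3) = \aa^4$ to the motivic setting. Since $\alpha_3$ sits in weight $3$ while $\aa^4$ sits in weight $4$, and a motivic $d_3$ preserves weight, the lift is forced to be $d_3(\alpha_3) = \tau \aa^4$. The generators $\tau$, $\aa$, and $\alpha_4$ are permanent cycles on degree grounds, so Leibniz determines $d_3$ on the whole localized $E_2$. Because $\aa^4$ is a unit there, the equation $d_3(\alpha_3) = \tau \aa^4$ makes $\tau$ a $d_3$-boundary, and $E_4 \cong \F_2[\aa^{\pm 1}, \alpha_3^2, \alpha_4]/\alpha_4^2$. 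To conclude $E_4 = E_\infty$, I would perform a tri-graded degree check showing no valid target exists for any $d_r$ with $r \geq 4$ on the generators $\aa$, $\alpha_3^2$, or $\alpha_4$.

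Part (3) then follows from convergence of the localized motivic Adams--Novikov spectral sequence to $\pi_{*,*} S^{0,0}[\eta^{-1}]$ (the vanishing line argument already noted in the body). The sparse tri-grading of $E_\infty$ precludes additive and multiplicative extensions. The named homotopy classes $\eta$, $\sigma$, $\mu_9$ are matched to the $E_\infty$-representatives $\aa$, $\alpha_4$, and $\aa^{-1}\alpha_3^2$ by direct comparison of bidegrees; for instance, $\aa^{-1}\alpha_3^2$ lies in $(s,f,w) = (9,1,5)$, matching $\mu_9 \in \pi_{9,5}$.

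The main obstacle is the step $E_4 = E_\infty$: this is where the motivic answer diverges most sharply from the classical one, since classically $\pi_{*} S^0[\eta^{-1}] = 0$ forces many higher differentials to be nonzero, whereas motivically the weight grading obstructs them all. Although finite in principle, this tri-graded sparseness check must be executed carefully, essentially reprising the argument of Andrews--Miller in the motivic tri-grading.
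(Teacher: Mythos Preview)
The paper does not prove this proposition at all: it is quoted verbatim from \cite{AM14} and stated ``for reference.'' So there is no proof to compare against, and your sketch is really a proposed derivation of a cited result.

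Your outline is sound and is essentially the natural way to recover the statement from the classical Andrews--Miller computation together with Proposition~\ref{prop:motANss}. Two remarks:

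\emph{The classical comparison.} Your final paragraph mischaracterizes the classical situation. Classically $d_3(\alpha_3)=\aa^4$, and since $\aa$ is a unit this makes $1$ a $d_3$-boundary, so the classical localized $E_4$ is already zero; no higher differentials are needed. The motivic divergence is not that the classical case has ``many higher differentials'' but rather that the motivic $d_3$ hits only the $\tau$-multiples, leaving the $\tau$-free part alive.

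\emph{The step $E_4=E_\infty$.} This is easier than you suggest. Every monomial in $\F_2[\aa^{\pm1},\alpha_3^2,\alpha_4]/\alpha_4^2$ satisfies $s+f=2w$ (check on generators). A differential $d_r$ has tridegree $(-1,r,0)$, so it would send an element with $s+f=2w$ to one with $s+f=2w+(r-1)$. For $r\geq 2$ the target lies off the plane $s+f=2w$, hence is zero in $E_4$. Thus $E_4=E_\infty$ by a one-line degree argument, not a delicate case analysis.
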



\section{The cofiber of $\tau$}

Recall that $\tau$ is a map
$S^{0,-1} \lto S^{0,0}$ 
that induces multiplication by $\tau$ in motivic cohomology.
Let $\Ct$ be the cofiber of this map. We have a cofiber sequence
\begin{equation} \label{eq:cofseqtau}
S^{0,-1} \stackrel{\tau}{\lto} S^{0,0} \lto \Ct \lto S^{1,-1}.
\end{equation}
We will use the spectrum $\Ct$ to obtain information about the homotopy groups
of the motivic sphere. 

Using the relationship between the motivic and classical Adams-Novikov 
spectral sequences, we get the following remarkable description 
of the homotopy groups of $\Ct$.

\begin{prop}[{\cite[Proposition 6.2.5]{Isaksen14}}]
\label{prop:htpycoftau}
The homotopy of $\Ct$ is given by
\begin{equation*}
\pi_{s,w}(\Ct) \iso \motE[s,2w -s,w]{2} \iso \clE[s,2w-s]{2}.
\end{equation*}
\begin{proof}
For reference, 
we reproduce the short proof from \cite{Isaksen14}.
The defining cofiber sequence \eqref{eq:cofseqtau} induces a long exact sequence 
\begin{equation*}
\ldots \lto E_2(S^{0,-1}; BPL) \stackrel{\tau}{\lto} \motE{2} \lto E_2(\Ct; BPL) \lto \cdots
\end{equation*}
on motivic Adams-Novikov $E_2$ pages.
Multiplication by $\tau$ is injective by Proposition \ref{prop:motANss}, so $E_2(\Ct;BPL)$ is isomorphic to $\olmotE{2}$. Therefore, 
$E_2(\Ct;BPL)$ is concentrated in degrees of the form $(s,f,\frac{s+f}{2})$.
For degree reasons, there can be no differentials, so the spectral sequence collapses to $E_{\infty}(\Ct; BPL) \cong \olmotE{2}$. 
Moreover, for similar degree reasons, no hidden extensions are possible.
\end{proof}
\end{prop}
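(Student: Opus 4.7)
The plan is to leverage the structural description of the motivic Adams-Novikov $E_2$ page given in Proposition~\ref{prop:motANss}. First, I would smash the cofiber sequence \eqref{eq:cofseqtau} with $BPL$ to obtain an induced long exact sequence of motivic Adams-Novikov $E_2$ pages. In tridegree $(s,f,w)$ it reads
\[
\cdots \lto \motE[s,f,w+1]{2} \stackrel{\cdot\tau}{\lto} \motE[s,f,w]{2} \lto \motECtau[s,f,w]{2} \lto \motE[s-1,f,w+1]{2} \lto \cdots.
\]
By Proposition~\ref{prop:motANss}, $\motE{2}$ is a free $\Z[\tau]$-module on $\olmotE{2}$, so $\tau$-multiplication is injective in every tridegree. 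Consequently the long exact sequence breaks into short exact sequences and $\motECtau{2} \cong \motE{2}/\tau \cong \olmotE{2}$; equivalently, $\motECtau[s,f,w]{2}$ is zero unless $w = \tfrac{s+f}{2}$, in which case it equals $\clE[s,f]{2}$.

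Next I would verify that the motivic Adams-Novikov spectral sequence for $\Ct$ collapses at $E_2$. A $d_r$ differential moves a class from tridegree $(s,f,w)$ to $(s-1, f+r, w)$, preserving the weight. For both source and target to be supported in $\olmotE{2}$ we would need $w = \tfrac{s+f}{2}$ and simultaneously $w = \tfrac{s-1+f+r}{2}$, forcing $r = 1$. Since differentials on $E_r$ have $r \geq 2$, every differential must vanish and $\motECtau[s,f,w]{\infty} \cong \olmotE[s,f,w]{2}$.

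Finally, I would rule out hidden additive extensions: in each fixed bidegree $(s,w)$, the $E_\infty$ page is concentrated in the unique Adams-Novikov filtration $f = 2w-s$, so the associated graded of $\pi_{s,w}(\Ct)$ has at most one nontrivial piece and no filtration extension can occur. This yields the stated isomorphism $\pi_{s,w}(\Ct) \cong \motE[s,2w-s,w]{2} \cong \clE[s, 2w-s]{2}$. The only step needing care is the collapse argument, but it is resolved entirely by the weight-parity constraint built into $\olmotE{2}$; everything else follows formally from Proposition~\ref{prop:motANss}.
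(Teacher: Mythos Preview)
Your proposal is correct and follows essentially the same route as the paper: use the cofiber sequence to get a long exact sequence on $E_2$ pages, invoke $\tau$-injectivity from Proposition~\ref{prop:motANss} to identify $\motECtau{2}$ with $\olmotE{2}$, then argue collapse and absence of extensions by degree reasons. The only difference is that you spell out the parity computation for $d_r$ and the single-filtration argument for extensions explicitly, whereas the paper simply says ``for degree reasons''.
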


Proposition \ref{prop:htpycoftau}
says essentially that the motivic homotopy groups of $\Ct$
are isomorphic to the classical Adams-Novikov $E_2$ page 
as bigraded objects.

\begin{cor} \label{cor:htpycoftau}
The 
group $\pi_{s,w}(\Ct)$ is zero when $w \leq \frac{1}{2} s$, 
except that $\pi_{0,0}(\Ct) = \Z_2$.
\begin{proof}
By Proposition \ref{prop:htpycoftau}, we have an isomorphism $\pi_{s,w}(\Ct) \cong \clE[s,2w-s]{2}$. 
When $2w-s \leq 0$, 
the group 
$\clE[s,2w-s]{2}$ is zero, except that
$\clE[0,0]{2}$ equals $\Z_2$.
\end{proof}
\end{cor}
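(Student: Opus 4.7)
The plan is to reduce the corollary directly to a statement about the classical Adams--Novikov $E_2$ page, using the identification supplied by Proposition \ref{prop:htpycoftau}.

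First I would translate the hypothesis. The condition $w \leq \tfrac{1}{2}s$ is equivalent to $2w - s \leq 0$. By Proposition \ref{prop:htpycoftau}, $\pi_{s,w}(\Ct) \cong \clE[s,2w-s]{2}$, so the task reduces to computing $\clE[s,f]{2}$ in the range $f \leq 0$.

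Next I would split into two cases based on the sign of $f = 2w - s$. If $2w - s < 0$, then $\clE[s,f]{2}$ is an $\Ext$ group in negative cohomological degree, namely $\Ext^{f,*}_{BP_*BP}(BP_*,BP_*)$ with $f<0$, and this vanishes for elementary homological-algebra reasons. If $2w - s = 0$, then we are looking at the filtration-zero line $\clE[s,0]{2} = \Hom_{BP_*BP}(BP_*, \Sigma^s BP_*)$, i.e.\ the $BP_*BP$-primitives of $BP_*$ in internal degree $s$. The primitives are just the copy of $\Z_{(2)}$ in degree zero, which after 2-completion contributes $\Z_2$ in bidegree $(s,f) = (0,0)$ and nothing else. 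Combining both conditions $2w-s=0$ and $s=0$ forces $(s,w)=(0,0)$, giving exactly the advertised exception.

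There is no real obstacle here: once Proposition \ref{prop:htpycoftau} is granted, the corollary is a routine inspection of the edge of the Adams--Novikov $E_2$ page. The only thing worth being careful about is treating the boundary case $2w - s = 0$ separately from the strict inequality, since that is where the one surviving class $\Z_2 = \pi_{0,0}(\Ct)$ lives.
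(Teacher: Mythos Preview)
Your proposal is correct and follows exactly the paper's approach: invoke Proposition~\ref{prop:htpycoftau} to identify $\pi_{s,w}(\Ct)$ with $\clE[s,2w-s]{2}$, then observe that this classical $E_2$ group vanishes when the filtration $2w-s$ is nonpositive, with the single exception $\clE[0,0]{2}=\Z_2$. The only difference is that the paper simply asserts this last vanishing, while you spell out the reason (negative $\Ext$ degrees vanish, and the filtration-zero line consists of the primitives of $BP_*$); this is a harmless elaboration rather than a different argument.
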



\section{The vanishing and local regions} \label{section:mainresults}
 
The goal of this section is to make precise the 
results on each region of Figure \ref{fig:chartintro} that were
described informally in Section \ref{sctn:intro}.

\subsection{The vanishing region}

We first describe the region to the left of the line $s =0$
or above the line $w = s$.

\begin{thm}
\label{thm:vanishing}
The group $\pi_{s,w}$ is zero if $w > s$ or $s <0$.
\begin{proof}
We begin with the motivic May spectral sequence; see 
\cite[Chapter 2]{Isaksen14} for more details.
The $E_1$ page is a polyonomial algebra over $\F_2[\tau]$ 
with generators $h_{ij}$ for all $i > 0$ and $j \geq 0$.
The element $h_{i0}$ lies in stem $2^i-2$ and weight $2^{i-1} - 1$,
while the element $h_{ij}$ lies in stem $2^j(2^i - 1) -1$
and weight $2^{j-1} (2^i-1)$ for $j > 0$.
In every case, the weight is always less than or equal to the stem.
Therefore, the entire May $E_1$ page vanishes in degrees
where the weight is greater than the stem.

The target of the motivic May spectral sequence is the $E_2$ page
of the motivic Adams spectral sequence.  Thus the Adams $E_2$ page
also vanishes in degrees where the weight is greater than the stem.

The target of the motivic Adams spectral sequence are the
homotopy groups of the 2-completed motivic sphere.  Finally,
these homotopy groups vanish in degrees where the weight is greater
than the stem.

A similar argument shows that the homotopy groups vanish in negative stems.
\end{proof}
\end{thm}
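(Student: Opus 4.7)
The plan is to push a vanishing statement down through the motivic May spectral sequence and the motivic Adams spectral sequence, as developed in \cite[Chapter 2]{Isaksen14}. Given that the May generators have explicit and tractable tridegrees, the cleanest route is to verify the inequality $0 \leq w \leq s$ on the generators and propagate it to $\pi_{s,w}$ through both spectral sequences.

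I would start by recalling that the motivic May $E_1$ page is a polynomial algebra over $\F_2[\tau]$ on generators $h_{ij}$ with $i \geq 1$ and $j \geq 0$. The generator $h_{i0}$ lies in stem $2^i - 2$ and weight $2^{i-1}-1$, while for $j \geq 1$ the generator $h_{ij}$ lies in stem $2^j(2^i-1)-1$ and weight $2^{j-1}(2^i-1)$. A direct check shows that each of these satisfies $0 \leq w \leq s$, and this inequality is preserved by products, so the May $E_1$ page vanishes whenever $w > s$ or $s < 0$. The May spectral sequence converges to the motivic Adams $E_2$ page, so the latter vanishes in the same region. Finally, the motivic Adams spectral sequence converges to the 2-completed $\pi_{s,w}$, so the vanishing propagates to the homotopy groups.

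The main obstacle, such as it is, is really just bookkeeping: verifying the stem/weight inequalities on the May generators is an elementary calculation, and the convergence of both spectral sequences is standard machinery. An alternative route through the motivic Adams--Novikov spectral sequence via Corollary \ref{cor:motANss} handles the $s < 0$ case immediately (since $\clE[s,f]{2} = 0$ for $s < 0$ by connectivity of $BP$), but for the $w > s$ case it would require classical vanishing of $\clE[s,f]{2}$ above the line $f = s$, which is not available from the excerpt and would need separate justification. For that reason the motivic May approach gives a uniform treatment of both regions and is the one I would pursue.
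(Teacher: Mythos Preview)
Your proposal is correct and follows essentially the same route as the paper: verify $s\geq 0$ and $w\leq s$ on the motivic May $E_1$ generators, propagate through the May spectral sequence to the Adams $E_2$ page, and then through the Adams spectral sequence to $\pi_{s,w}$. The only cosmetic point is that the inequality $0\leq w$ is not preserved under multiplication by $\tau$, but what you actually need---$s\geq 0$ and $w\leq s$---is, so the argument goes through unchanged.
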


\begin{remark}
Both of the boundary lines in Theorem \ref{thm:vanishing} are sharp
in the following sense.  
The non-zero elements $\eta^k$ form an infinite family on the line $w = s$,
and the non-zero elements $\tau^k$ form an infinite family on the
line $s = 0$.
\end{remark}

\subsection{The $\tau$-local region} 
We next describe the region below the line $w = \frac{1}{2}s + 1$
and to the right of the line $s = 0$.

\begin{thm}
\label{thm:tau-local}
If $w \leq \frac{1}{2} s + 1$ and $s > 0$, 
or if $w \leq 0$ and $s = 0$, then the map
\begin{equation*}
\pi_{s,w} \lto \pi_{s,w-1}
\end{equation*}
given by multiplication by $\tau$ is an isomorphism. 
Moreover, in this range $\pi_{s,w}$ is isomorphic to the classical group $\pi_s$.
\begin{proof} 
For $s= 0$, we know that $\pi_{0, \ast}$ is equal to $\Z_2[\tau]$,
where $\tau$ has degree $(0,-1)$. In Figure \ref{fig:chartintro},
these elements are represented by countably many copies of $\Z_2$
on the negative $w$ axis. Since the classical group $\pi_0$ is 
isomorphic to $\Z_2$, this settles the case $s=0$.

Now we may assume that $s > 0$. Consider the long exact sequence 
\begin{equation*}
\cdots \lto \pi_{s+1,w-1}(\Ct) \lto \pi_{s,w} \stackrel{\tau}{\lto} 
\pi_{s, w-1} \lto \pi_{s,w-1}(\Ct) \lto \cdots
\end{equation*}
of homotopy groups obtained from the cofiber sequence \eqref{eq:cofseqtau}.
By the vanishing result of Corollary \ref{cor:htpycoftau}, 
the long exact sequence becomes
\begin{equation}
\cdots \lto 0 \lto \pi_{s,w} \stackrel{\tau}{\lto} \pi_{s, w-1} 
\lto 0 \lto \cdots
\end{equation}
when $w \leq \frac{1}{2} s + 1$.
Therefore multiplication by $\tau$ is an isomorphism in this range. 

The final claim follows from the fact that 
realization from motivic homotopy theory to 
classical homotopy theory induces $\tau$-localization 
on homotopy groups \cite[Proposition 3.0.2]{Isaksen14}.
\end{proof}
\end{thm}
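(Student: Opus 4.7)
The plan is to reduce the $\tau$-isomorphism statement to a pair of vanishing facts about $\pi_{\ast,\ast}(\Ct)$, which we already control via Corollary \ref{cor:htpycoftau}. I would first dispatch the edge case $s = 0$: here $\pi_{0,\ast} \iso \Z_2[\tau]$ with $\tau$ in weight $-1$, so multiplication by $\tau \colon \pi_{0,w} \to \pi_{0,w-1}$ is manifestly an isomorphism whenever $w \leq 0$; classically $\pi_0 \iso \Z_2$, so this case matches.

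For $s > 0$, the long exact sequence obtained by mapping $S^{s,w}$ into the cofiber sequence \eqref{eq:cofseqtau} takes the form
\[
\cdots \lto \pi_{s+1, w-1}(\Ct) \lto \pi_{s,w} \stackrel{\tau}{\lto} \pi_{s, w-1} \lto \pi_{s, w-1}(\Ct) \lto \cdots,
\]
so multiplication by $\tau$ will be an isomorphism as soon as the two flanking groups vanish. By Corollary \ref{cor:htpycoftau}, $\pi_{s,w-1}(\Ct) = 0$ precisely when $w - 1 \leq s/2$, and $\pi_{s+1,w-1}(\Ct) = 0$ precisely when $w - 1 \leq (s+1)/2$; the first condition is exactly the hypothesis $w \leq \frac{1}{2}s + 1$, and the second is strictly weaker. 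Since $s > 0$, neither $(s, w-1)$ nor $(s+1, w-1)$ can coincide with the exceptional bidegree $(0,0)$, so the $\Z_2$ exception in the corollary does not intervene.

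For the ``moreover'' part, I would exploit the fact that the region $w \leq \frac{1}{2} s + 1$ is stable under the operation $w \mapsto w - 1$; iterating the isomorphism just proved yields a chain $\pi_{s,w} \iso \pi_{s,w-1} \iso \pi_{s,w-2} \iso \cdots$, so $\pi_{s,w}$ already agrees with its $\tau$-localization. By \cite[Proposition 3.0.2]{Isaksen14}, that localization coincides with the classical homotopy group $\pi_s$ of the 2-completed sphere, giving the stated identification. I do not foresee any genuine obstacle: Corollary \ref{cor:htpycoftau} is doing all of the work, and the only subtlety is the bookkeeping around the exceptional point $(0,0)$, which is handled cleanly by the case split on $s$.
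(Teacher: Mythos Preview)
Your proposal is correct and follows essentially the same route as the paper: the $s=0$ case is handled directly from $\pi_{0,\ast}\cong\Z_2[\tau]$, the $s>0$ case uses the long exact sequence of the cofiber sequence for $\tau$ together with Corollary~\ref{cor:htpycoftau} to kill the flanking $\Ct$-groups, and the identification with $\pi_s$ comes from \cite[Proposition 3.0.2]{Isaksen14}. Your write-up is in fact slightly more careful than the paper's in verifying both flanking vanishing conditions and in explicitly ruling out the exceptional bidegree $(0,0)$.
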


\begin{example}
\label{ex:Pkh14}
Consider the elements 
$P^k h_1^4$ of the motivic Adams spectral sequence
in degrees $(4,4,4) + k(8,4,4)$.
These elements detect
homotopy classes that are annihilated by $\tau$.
This family lies on the line
$w = \frac{1}{2} s + 2$.
Therefore, the slope of the line in 
Theorem \ref{thm:tau-local} cannot be improved.
\end{example}

\subsection{The $\eta$-local region} 
Now we consider the region above the line $w = \frac{3}{5} s + 1$
and below the line $w = s$.

\begin{thm}
\label{thm:eta-local}
If $w > \frac{3}{5}s + 1$, then the map
\begin{equation*}
\pi_{s,w} \lto \pi_{s+1,w+1}
\end{equation*}
given by multiplication by $\eta$ is an isomorphism.
\begin{proof}
By Proposition \ref{prop:motANss}, any non-trivial element in the motivic Adams-Novikov spectral sequence
satisfies the inequality $s + f - 2w \geq 0$. 
Under this condition, our constraint $w > \frac{3}{5}s + 1$ implies 
that $s < 5f - 10$. By Proposition \ref{prop:locANss}, in this region the motivic Adams-Novikov spectral sequence agrees with the $\aa$-localized motivic Adams-Novikov spectral sequence, and thus the survivors are $\eta$-local as claimed. 
\end{proof}
\end{thm}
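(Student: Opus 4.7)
The plan is to work entirely on the $E_2$ page of the motivic Adams-Novikov spectral sequence and show that, when $w > \frac{3}{5}s + 1$, every nonzero class is forced to satisfy $s < 5f - 10$, placing it in the range where Proposition \ref{prop:locANss} gives an isomorphism with the $\alpha_1$-localized spectral sequence. The relevant motivic homotopy groups will then be $\eta$-local, and the theorem will follow because $\eta$ acts invertibly on $\eta$-local groups.

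The first step is the algebraic check: by Corollary \ref{cor:motANss}, any nonzero class in $\motE[s,f,w]{2}$ satisfies $f \geq 2w - s$, and combining this inequality with the hypothesis yields
\begin{equation*}
5f - 10 \;\geq\; 10w - 5s - 10 \;>\; 10\bigl(\tfrac{3}{5}s + 1\bigr) - 5s - 10 \;=\; s.
\end{equation*}
An identical computation works at $(s+1, f+1, w+1)$, so the target of multiplication by $\eta$ also lands in the localization-isomorphism range. Next I would invoke Proposition \ref{prop:locANss} to conclude that the comparison map $\motE{2} \to \alpha_1^{-1} \motE{2}$ is an isomorphism at these tridegrees. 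Because Adams-Novikov differentials send a class at $(s, f)$ to $(s-1, f+r)$ and thereby strengthen the inequality $s < 5f - 10$, this isomorphism descends to $E_\infty$. Convergence of both spectral sequences then identifies $\pi_{s,w}$ with the corresponding $\eta$-local group described in Proposition \ref{prop:loc-pi}, on which $\eta$ acts invertibly by construction, so multiplication by $\eta$ is an isomorphism $\pi_{s,w} \to \pi_{s+1,w+1}$.

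The main obstacle will be the passage from the $E_2$-page comparison to an honest comparison of filtered motivic homotopy groups, which in principle requires ruling out hidden extensions that transport classes out of the localization range. However, since the inequality $s < 5f - 10$ is preserved under multiplication by $\eta$ (which increases both $s$ and $f$ by $1$) and strengthened by every Adams-Novikov differential, this compatibility is automatic, and the essential content of the proof reduces to the single algebraic inequality above.
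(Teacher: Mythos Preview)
Your proposal is correct and follows essentially the same route as the paper: deduce $s < 5f - 10$ from the support condition $f \geq 2w - s$ together with $w > \tfrac{3}{5}s + 1$, then invoke Proposition~\ref{prop:locANss}. You have simply spelled out more of the details (the explicit inequality chain, the check at the target degree, and the remarks on differentials and hidden extensions) than the paper's terse version does.
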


\begin{remark}
In the motivic Adams spectral sequence,
every element above 
the line $f = \frac{1}{2}s + 2$ is $h_1$-local \cite{GI2}.
Using that the motivic Adams spectral sequence vanishes when
$s+f-2w < 0$ \cite[Remark 2.1.13]{Isaksen14}, we can conclude that
$\pi_{s,w}$ is $\eta$-local if $w \geq \frac{3}{4} s + 1$.
This result is strictly weaker than the result of
Theorem \ref{thm:eta-local}.
\end{remark}

\begin{remark} \label{rem:etalocalelem}
Proposition \ref{prop:loc-pi} explicitly describes
the homotopy groups of the $\eta$-local sphere. 
In particular, every homotopy group in
the $\eta$-local region is completely understood.
\end{remark}

\begin{example} \label{ex:w1periodicstuff}
Recent work of Michael Andrews and others shows that the elements 
$h_2^3 g^k$ of the motivic Adams spectral sequence in degrees
$(9,3,6) + k(20,4,12)$ are non-trivial permanent cycles
for all $k \geq 0$.
The associated homotopy classes form a ``$w_1$-periodic" 
family that lie on the line $w = \frac{3}{5} s + \frac{3}{5}$, and they 
are all annihilated by $\eta$.
This shows that the slope of the bottom line in 
Theorem \ref{thm:eta-local} cannot be improved.

On the other hand, the elements $\eta^k$ lie on the line $w = s$,
which shows that the top line in Theorem \ref{thm:eta-local}
is sharp.
\end{example}

\begin{remark}
Theorem \ref{thm:eta-local} shows that every element that lies 
above the line $w = \frac{3}{5}s + 1$ and below the line $w = s$
is $\eta$-local.  However, there are $\eta$-local elements that lie
outside this region.  For example, consider the elements $P^k h_1$ 
of the motivic Adams spectral sequence in degree
$(1,1,1) + k(8,4,4)$.
These elements detect homotopy classes that are $\eta$-local
and lie below the line $w = \frac{3}{5} s + 1$.
\end{remark}


\subsection{The not understood region} 
\label{section:exoticregion}

Finally, we come to the region above the line $w= \frac{1}{2} s$
and below the line $w = \frac{3}{5} s + 1$.
All of the exotic behavior of motivic homotopy groups lies
in this region.

For example, there is an exotic non-nilpotent element in
$\pi_{32,18}$.  This element and all of its powers lie in the
not understood region.

Motivic 
$v_n$-self maps of period $k$ have
bidegree $k( 2^{n+1}-2, 2^n-1 )$. Therefore,
all $v_n$-periodic families in $\pi_{*,*}$ lie on 
lines of slope $1/2$.  These families are parallel
to the bottom edge of the not understood region.

Recent work of Andrews and others explores ``$w_n$-periodicity"
in motivic homotopy groups.  
Motivic $w_n$-self maps of period $k$ have
bidegree $k (2^{n+2} -3, 2^{n+1} -1)$.
For example, $w_0$ has degree $(1,1)$ and in fact is the same as
multiplication by $\eta$.
Next, $w_1$ has degree $(5,3)$.  Thus, $w_1$-periodic families,
such as the one in Example \ref{ex:w1periodicstuff},
lie on lines of slope $\frac{3}{5}$.
Since $w_2$ has degree $(13,7)$, we speculate that
there is a line of slope $\frac{7}{13}$ such that
all elements above this line are $w_0$-periodic (i.e., $\eta$-local) or 
$w_1$-periodic.

As $n$ increases, $w_n$-periodic families lie on lines whose slopes
approach $\frac{1}{2}$ from above.  It is conceivable that
$w_n$-periodicity will lead to a better understanding of the
large-scale structure of the not understood region.

\bibliographystyle{amsplain}
\bibliography{mybibliography}

\end{document}